\def\floor #1{\lfloor{#1}\rfloor}
\theoremstyle{plain}
\newtheorem{theorem}{Theorem}
\theoremstyle{definition}
\newtheorem*{Ack}{Acknowledgment}
\theoremstyle{remark}
\begin{document}
\title{Sums of Products of Bernoulli numbers of the second kind}

\author{Ming Wu\\
\small Department of Mathematics, JinLing Institute of Technology,\\
\small Nanjing 210001, People's Republic of China\\
\small {\it Email}: mingwu1996@yahoo.com.cn\\
\and Hao Pan\\
\small Department of Mathematics, Shanghai Jiaotong University,\\
\small Shanghai 200240, People's Republic of China\\
\small {\it Email}: haopan79@yahoo.com.cn}
\date{}
\maketitle

\begin{abstract}The Bernoulli numbers $b_0,b_1,b_2,\cdots$ of the second kind are
defined by
$$
\sum_{n=0}^\infty b_nt^n=\frac{t}{\log(1+t)}.
$$
In this paper, we give an explicit formula for the sum
$$
\sum_{\substack{j_1+j_2+\cdots+j_N=n\\
j_1,j_2,\ldots,j_N\geqslant 0}}b_{j_1}b_{j_2}\cdots b_{j_N}.
$$
We also establish a $q$-analogue for
$$
\sum_{k=0}^{n}b_kb_{n-k}=-(n-1)b_n-(n-2)b_{n-1}.
$$
\end{abstract}

The Bernoulli numbers $B_0,B_1,B_2,\cdots$ are defined by
$$
\sum_{n=0}^\infty\frac{B_n}{n!}t^n=\frac{t}{e^t-1}.
$$

Euler (cf. \cite{SD}) noted that if $n>1$ then
\begin{equation}
\label{b1}
\sum_{j=1}^{n-1}\binom{2n}{2j}B_{2j}B_{2n-2j}=-(2n+1)B_{2n}.
\end{equation}

A $q$-analogue of (\ref{b1}) has been given by Satoh in \cite{Sa}.

As a generalization of (\ref{b1}), in \cite{Di} Dilcher proved
that for $n>N/2$ we have
\begin{align}
\label{db1}
&\sum_{\substack{j_1+j_2+\cdots+j_N=n\\j_1,j_2,\ldots,j_N\geqslant 0}}\binom{2n}{2j_1,2j_2,\ldots,2j_N}B_{2j_1}B_{2j_2}\cdots B_{2j_N}\notag\\
=&\frac{(2n)!}{(2n-N)!}\sum_{k=0}^{\floor{(N-1)/2}}c_{k}^{(N)}\frac{B_{2n-2k}}{2n-2k},
\end{align}
where the array $\{c_k^{(N)}\}$ is given by $c_0^{(1)}=1$ and the
recursion
$$
c_{k}^{(N+1)}=-\frac{1}{N}c_{k}^{(N)}+\frac{1}{4}c_{k-1}^{(N-1)}
$$
with $c_k^{(N)}=0$ for $k<0$ and $k>\floor{(N-1)/2}$.

On the other hand, the Bernoulli numbers $b_0,b_1,b_2,\cdots$ of
the second kind are given by
$$
\sum_{n=0}^\infty b_nt^n=\frac{t}{\log(1+t)}.
$$
And we set $b_k=0$ whenever $k<0$. It is easy to check that
\begin{equation}
\label{b2e} \sum_{k=0}^n\frac{(-1)^kb_{n-k}}{k+1}=\delta_{n,0},
\end{equation}
where $\delta_{n,0}=1$ or $0$ according to whether $n=0$ or not.

In \cite{Ho}, Howard used the Bernoulli numbers of the second kind
to give an explicit formula for degenerate Bernoulli numbers. And
some 2-adic congruences of $b_n$ have been investigated by
Adelberg in \cite{Ad}.

In this short note, we shall give an analogue of (\ref{db1}) for
the Bernoulli numbers of the second kind. Define an array
$\{a_{k}^{(N)}(x)\}$ of polynomials by
$$
a_{0}^{(1)}(x)=1, \qquad a_{k}^{(N)}(x)=0\text{ for }k<0\text{ and
}k\geqslant N,
$$
and
$$
a_{k}^{(N)}(x)=-\frac{1}{N-1}((x-N+1)a_{k}^{(N-1)}(x)+(x-N)a_{k-1}^{(N-1)}(x-1))
$$
if $N>k\geqslant 0\text{ and } N\geqslant2$.
\begin{theorem}
For positive integers $n$ and $N$, we have
\begin{equation}
\label{b2} \sum_{\substack{j_1+j_2+\cdots+j_N=n\\
j_1,j_2,\ldots,j_N\geqslant 0}}b_{j_1}b_{j_2}\cdots
b_{j_N}=\sum_{k=0}^{N-1}a_{k}^{(N)}(n)b_{n-k}.
\end{equation}
\end{theorem}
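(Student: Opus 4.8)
The plan is to translate everything into generating functions. Put $f(t)=t/\log(1+t)=\sum_{n\geqslant0}b_nt^n$ and, for each $N\geqslant1$, $g_N(t)=f(t)^N=\sum_{n\geqslant0}G_N(n)t^n$. Then $G_N(n)$ is precisely the left-hand side of (\ref{b2}), so the theorem asserts $G_N(n)=\sum_{k=0}^{N-1}a_k^{(N)}(n)b_{n-k}$. I would prove this by induction on $N$, the base case $N=1$ being the tautology $G_1(n)=b_n=a_0^{(1)}(n)b_n$. To keep the induction clean I would state the identity for all integers $n$: for $n<0$ both sides vanish because $b_m=0$ for $m<0$, and for $n=0$ both sides equal $1$ once one checks $a_0^{(N)}(0)=1$ by a one-line induction using the defining recursion (the only surviving term being $-\tfrac1{N-1}\cdot(0-N+1)\,a_0^{(N-1)}(0)$).

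The engine is a first-order differential recursion among the $g_N$. Differentiating the relation $\log(1+t)=t/f(t)$ gives $\dfrac{1}{1+t}=\dfrac{f-tf'}{f^2}$, hence
\[
f^2=(1+t)f-t(1+t)f'.
\]
Multiplying by $f^{N-1}$ and using $f^{N-1}f'=g_N'/N$ turns this into
\[
g_{N+1}=(1+t)\,g_N-\frac{t(1+t)}{N}\,g_N'.
\]
Now I extract the coefficient of $t^n$. Since $[t^n]\,(1+t)g_N=G_N(n)+G_N(n-1)$ and $[t^n]\,t(1+t)g_N'=nG_N(n)+(n-1)G_N(n-1)$, this yields the clean recurrence
\[
G_{N+1}(n)=-\frac1N\bigl((n-N)\,G_N(n)+(n-N-1)\,G_N(n-1)\bigr),
\]
valid for all integers $n$.

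For the inductive step I would substitute the hypothesis $G_N(m)=\sum_{k=0}^{N-1}a_k^{(N)}(m)b_{m-k}$, once with $m=n$ and once with $m=n-1$, into this recurrence. In the piece coming from $G_N(n-1)$ I reindex $k\mapsto k-1$ so that every Bernoulli index becomes $b_{n-k}$; gathering the coefficient of $b_{n-k}$ for $0\leqslant k\leqslant N$ produces exactly $-\tfrac1N\bigl((n-N)a_k^{(N)}(n)+(n-N-1)a_{k-1}^{(N)}(n-1)\bigr)$, which is the paper's defining recursion for $a_k^{(N+1)}(n)$ (the displayed recursion with $N$ replaced by $N+1$), the conventions $a_k^{(N)}=0$ for $k<0$ and $k\geqslant N$ handling the endpoints $k=0$ and $k=N$ automatically. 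Hence $G_{N+1}(n)=\sum_{k=0}^{N}a_k^{(N+1)}(n)b_{n-k}$, closing the induction.

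The only real difficulty is bookkeeping: one must check that the two partial sums correctly generate the boundary terms $k=0$ and $k=N$ under the zero conventions, and that the shift $n\mapsto n-1$ in the argument of $a_{k-1}^{(N)}$ is exactly the one demanded by the paper's recursion — which it is, because that shift is forced by the $t^2g_N'$ contribution in the coefficient extraction. No analytic issues arise, since $f$ is a genuine formal power series with $f(0)=b_0=1$, so all the manipulations above are legitimate in $\mathbb{Q}[[t]]$.
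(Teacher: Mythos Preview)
Your proof is correct and follows essentially the same strategy as the paper: induction on $N$ via a differential identity for the generating function, leading to the same coefficient recurrence $G_{N+1}(n)=-\tfrac1N\bigl((n-N)G_N(n)+(n-N-1)G_N(n-1)\bigr)$, followed by the identical substitution-and-reindexing step that matches the defining recursion for $a_k^{(N+1)}$. The only cosmetic difference is that you obtain the differential relation by differentiating $\log(1+t)=t/f$ and multiplying by $f^{N-1}$ (staying in $\mathbb{Q}[[t]]$ throughout), whereas the paper differentiates $1/\log^N(1+t)$ directly and extracts the coefficient of $t^{n-N-1}$; your explicit treatment of the cases $n\leqslant 0$ is a nice bit of hygiene that the paper leaves implicit.
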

\begin{proof}  Let $s_N(n)$ denote the left-hand side of
(\ref{b2}):
$$
s_N(n)=\sum_{\substack{j_1+j_2+\cdots+j_N=n\\
j_1,j_2,\ldots,j_N\geqslant 0}}b_{j_1}b_{j_2}\cdots b_{j_N}.
$$
Below we use induction on $N$ to show (\ref{b2}).

Clearly $s_1(n)=b_n$, whence (\ref{b2}) holds for $N=1$.

Now let $N>1$ and suppose that (\ref{bth1}) holds for smaller
values of $N$. Note that
$$
\frac{t^{N}}{\log^{N}(1+t)}=\bigg(\sum_{j=0}^\infty
b_jt^j\bigg)^N=\sum_{n=0}^\infty s_N(n)t^n.
$$

For convenience we use $[t^n]f(t)$ to denote the coefficient of
$t^n$ in the power series expansion of $f(t)$. Then
\begin{align*}
s_{N+1}(n)=&[t^n]\frac{t^{N+1}}{\log^{N+1}(1+t)}=[t^{n-N-1}]\frac{1}{\log^{N+1}(1+t)}\\
=&-[t^{n-N-1}]\bigg(\frac{(1+t)}{N}\frac{d}{dt}\bigg(\frac{1}{\log^N(1+t)}\bigg)\bigg)\notag\\
=&-[t^{n-N-1}]\frac{1}{N}\frac{d}{dt}\bigg(\frac{1}{\log^N(1+t)}\bigg)-[t^{n-N-2}]\frac{1}{N}\frac{d}{dt}\bigg(\frac{1}{\log^N(1+t)}\bigg).
\end{align*}
Now
$$
\frac{d}{dt}\bigg(\frac{1}{\log^N(1+t)}\bigg)=\frac{d}{dt}\bigg(\sum_{n=0}^{\infty}s_{N}(n)t^{n-N}\bigg)=\sum_{n=0}^{\infty}(n-N)s_{N}(n)t^{n-N-1}.
$$
Thus by the induction hypothesis on $N$,
\begin{align*}
s_{N+1}(n)=&-\frac{1}{N}\bigg((n-N)s_{N}(n)+(n-N-1)s_{N}(n-1)\bigg)\\
=&-\frac{n-N}{N}\sum_{k=0}^{N-1}a_{k}^{(N)}(n)b_{n-k}-\frac{n-N-1}{N}\sum_{k=0}^{N-1}a_{k}^{(N)}(n-1)b_{n-1-k}\\
=&-\frac{n-N}{N}\sum_{k=0}^{N-1}a_{k}^{(N)}(n)b_{n-k}-\frac{n-N-1}{N}\sum_{k=1}^{N}a_{k-1}^{(N)}(n-1)b_{n-k}\\
=&-\frac{1}{N}\sum_{k=0}^{N}\big((n-N)a_{k}^{(N)}(n)+(n-N-1)a_{k-1}^{(N)}(n-1)\big)b_{n-k}\\
=&\sum_{k=0}^{N}a_k^{(N+1)}(n)b_{n-k}.
\end{align*}
We are done.
\end{proof}

For example, substituting $N=2, 3$ in (\ref{b2}), we obtain that
\begin{equation}
\label{s2}
s_2{(n)}=-(n-1)b_n-(n-2)b_{n-1},
\end{equation}
and
\begin{equation}
s_3{(n)}=\frac{1}{2}(n-1)(n-2)b_n+\frac{1}{2}(n-2)(2n-5)b_{n-1}+\frac{1}{2}(n-3)^2b_{n-2}.
\end{equation}

For arbitrary integer $n$, let
$$
[n]_q=\frac{1-q^{n}}{1-q}.
$$
where $q$ is an indeterminant.

We call $[n]_q$ a $q$-analogue of $n$ since $\lim_{q\to
1}[n]_q=n$. Note that $[1]_q=1$ and $[n-a]_q=[n]_q-q^{n-a}[a]_q$.

Define the $q$-logarithm function by
$$
\log_q(1+t)=\sum_{n=1}^{\infty}\frac{(-1)^{n-1}t^n}{[n]_q}
$$
which is convergent for $|t|<1$.

We also define a $q$-analogue of the Bernoulli numbers of the
second kind by
$$
\sum_{n=0}^\infty b_n(q)t^n=\frac{t}{\log_q(1+t)}.
$$
A $q$-analogue of (\ref{b2e}) is
\begin{equation}
\label{b2r}
\sum_{k=0}^n\frac{(-1)^kb_{n-k}(q)}{[k+1]_q}=\delta_{n,0}.
\end{equation}
Now we give our $q$-analogue of (\ref{s2}).
\begin{theorem} For any integer $n\geqslant 0$, we have
\begin{equation}
\label{qb2}
\sum_{k=0}^{n}q^{k-1}b_k(q)b_{n-k}(q)=-[n-1]_qb_n(q)-[n-2]_qb_{n-1}(q),
\end{equation}
where we set $b_k(q)=0$ for $k<0$.
\end{theorem}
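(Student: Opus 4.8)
The plan is to adapt the generating-function argument of Theorem~1, replacing the ordinary derivative by the $q$-derivative $D_qf(t)=(f(qt)-f(t))/((q-1)t)$, which has the basic properties $D_qt^n=[n]_qt^{n-1}$, $D_q(fg)(t)=f(qt)D_qg(t)+g(t)D_qf(t)$, and $D_q(1/h)(t)=-D_qh(t)/(h(t)h(qt))$. Write $L(t)=\log_q(1+t)$ and $F(t)=\sum_{n\geqslant0}b_n(q)t^n=t/L(t)$. First I would identify the two sides of (\ref{qb2}) as coefficients of explicit power series. Since the coefficient of $t^n$ in $F(qt)F(t)$ equals $\sum_kq^kb_k(q)b_{n-k}(q)$, the left-hand side of (\ref{qb2}) is $[t^n]\,q^{-1}F(qt)F(t)=[t^n]\,t^2/(L(t)L(qt))$. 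For the right-hand side I would use $[n-1]_q=[n]_q-q^{n-1}$ together with $\sum_n[n]_qb_n(q)t^n=tD_qF(t)$ and $\sum_nq^{n-1}b_n(q)t^n=q^{-1}F(qt)$ to get $\sum_n[n-1]_qb_n(q)t^n=tD_qF(t)-q^{-1}F(qt)$, whence (after shifting by $t$ for the second summand) the right-hand side of (\ref{qb2}) is $[t^n]\bigl(-(1+t)(tD_qF(t)-q^{-1}F(qt))\bigr)$.

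Next I would compute $D_qF(t)$ explicitly. From $D_qL(t)=\sum_{n\geqslant1}(-1)^{n-1}t^{n-1}=1/(1+t)$ and the product and quotient rules above,
$$
D_qF(t)=\frac{1}{L(t)}-\frac{qt}{(1+t)L(t)L(qt)}.
$$
Substituting this and $q^{-1}F(qt)=t/L(qt)$ into $-(1+t)(tD_qF(t)-q^{-1}F(qt))$ and clearing denominators, the identity (\ref{qb2}) reduces to the single closed-form relation
$$
L(t)-L(qt)=\frac{(1-q)t}{1+t},
$$
which is elementary: since $(1-q^n)/[n]_q=1-q$ for every $n\geqslant1$, we have $L(t)-L(qt)=\sum_{n\geqslant1}(-1)^{n-1}(1-q^n)t^n/[n]_q=(1-q)\sum_{n\geqslant1}(-1)^{n-1}t^n=(1-q)t/(1+t)$.

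I expect no serious obstacle here; the only care needed is keeping track of the $q$-shifted arguments $F(qt),L(qt)$ versus $F(t),L(t)$ when applying the $q$-Leibniz rule. The one genuinely new point compared with the case $q\to1$ is recognizing that the weight $q^{k-1}$ in the convolution on the left of (\ref{qb2}) is exactly what converts $F(t)^2$ into $q^{-1}F(qt)F(t)$; once this is seen, the whole computation collapses to the elementary $q$-logarithm difference formula displayed above. As a sanity check, for $n=0$ and $n=1$ both sides of (\ref{qb2}) equal $q^{-1}$, consistent with $b_0(q)=1$ and $b_1(q)=1/[2]_q$.
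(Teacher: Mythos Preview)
Your argument is correct. The computation of $D_qF(t)$, the identification of the two sides of (\ref{qb2}) as the $t^n$-coefficients of $t^2/(L(t)L(qt))$ and of $-(1+t)\bigl(tD_qF(t)-q^{-1}F(qt)\bigr)$, and the reduction to the elementary identity $L(t)-L(qt)=(1-q)t/(1+t)$ all check out line by line.

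Your route is genuinely different from the paper's. The paper proves (\ref{qb2}) by induction on $n$: it writes $b_{n-k}(q)$ via the recurrence (\ref{b2r}), interchanges the two sums, applies the induction hypothesis to the inner convolution $\sum_{k}q^{k-1}b_k(q)b_{n-j-k}(q)$, and then simplifies using $[n-j-1]_q=[n]_q-q^{n-j-1}[j+1]_q$ together with another appeal to (\ref{b2r}). By contrast, you work entirely at the level of generating functions, replacing the ordinary derivative used in the paper's proof of Theorem~1 by the Jackson $q$-derivative; the key observation that the weight $q^{k-1}$ turns $F(t)^2$ into $q^{-1}F(qt)F(t)$ is exactly what makes the $q$-Leibniz rule applicable, and the whole identity then collapses to the one-line computation of $L(t)-L(qt)$. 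Your approach is more conceptual and explains \emph{why} the particular weight $q^{k-1}$ appears, while the paper's induction is more self-contained in that it needs no $q$-calculus formalism beyond the relation $[n-a]_q=[n]_q-q^{n-a}[a]_q$. Your method also parallels the paper's own treatment of the classical case $N=2$ more closely than the paper's proof of Theorem~2 does, and it hints at how one might attempt a $q$-analogue for general $N$ (though the iterated $q$-shifts in the $q$-Leibniz rule would complicate matters).
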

\begin{proof} We use induction on $n$.

When $n=0$, since $[-1]_q=-q^{-1}$ and $b_0(q)=1$ by (\ref{b2r}),
both sides of (\ref{qb2}) coincide with $q^{-1}$.

Now assume that $n>0$ and (\ref{qb2}) holds for smaller values of
$n$. In view of (\ref{b2r}), we have
$$
b_{n-k}(q)=-\sum_{j=1}^{n-k}\frac{(-1)^jb_{n-k-j}(q)}{[j+1]_q}
$$
when $k<n$. Thus
\begin{align*}
&\sum_{k=0}^{n}q^{k-1}b_k(q)b_{n-k}(q)\\
=&q^{n-1}b_n(q)-\sum_{k=0}^{n-1}q^{k-1}b_k(q)\sum_{j=1}^{n-k}\frac{(-1)^jb_{n-k-j}(q)}{[j+1]_q}\\
=&q^{n-1}b_n(q)-\sum_{j=1}^{n}\frac{(-1)^j}{[j+1]_q}\sum_{k=0}^{n-j}q^{k-1}b_k(q)b_{n-k-j}(q)\\
=&q^{n-1}b_n(q)+\sum_{j=1}^{n}\frac{(-1)^j}{[j+1]_q}([n-j-1]_qb_{n-j}(q)+[n-j-2]_qb_{n-j-1}(q))
\end{align*}
where we apply the induction hypothesis in the last step.Observe
that
\begin{align*}
&-\sum_{j=1}^{n}\frac{(-1)^j}{[j+1]_q}\sum_{k=0}^{n-j}q^{k-1}b_k(q)b_{n-k-j}(q)\\
=&\sum_{j=1}^{n}\frac{(-1)^j}{[j+1]_q}([n-j-1]_qb_{n-j}(q)+[n-j-2]_qb_{n-j-1}(q))\\
=&\sum_{j=1}^{n}\frac{(-1)^j}{[j+1]_q}(([n]_q-q^{n-j-1}[j+1]_q)b_{n-j}(q)+([n-1]_q-q^{n-j-2}[j+1]_q)b_{n-j-1}(q))\\
=&[n]_q\sum_{j=1}^{n}\frac{(-1)^jb_{n-j}(q)}{[j+1]_q}+[n-1]_q\sum_{j=1}^{n-1}\frac{(-1)^jb_{n-j-1}(q)}{[j+1]_q}
-\sum_{j=1}^{n}(-1)^jq^{n-j-1}b_{n-j}(q)\\&-\sum_{j=1}^{n-1}(-1)^jq^{n-j-2}b_{n-j-1}(q)\\
=&-[n]_qb_{n}(q)-[n-1]_qb_{n-1}(q)
-\sum_{j=1}^{n}(-1)^jq^{n-j-1}b_{n-j}(q)+\sum_{j=2}^{n}(-1)^jq^{n-j-1}b_{n-j}(q).
\end{align*}
Thus
\begin{align*}
\sum_{k=0}^{n}q^{k-1}b_k(q)b_{n-k}(q)=&q^{n-1}b_n(q)-[n]_qb_{n}(q)-[n-1]_qb_{n-1}(q)+q^{n-2}b_{n-1}(q)\\
=&-[n-1]_qb_{n}(q)-[n-2]_qb_{n-1}(q).
\end{align*}
This completes the proof.
\end{proof}

\begin{Ack} We are grateful to the anonymous referee for his/her
valuable comments on this paper. We also thank our advisor,
Professor Zhi-Wei Sun, for his helpful suggestions on this paper.
\end{Ack}

\medskip
\noindent 2000 {\it Mathematics Subject Classification.} Primary
11B68; Secondary 05A30, 11B65.
\end{document}